\newtheorem{theorem}{Theorem}[section]
\newtheorem{corollary}[theorem]{Corollary}
\newtheorem{prop}[theorem]{Proposition}
\newtheorem{lemma}[theorem]{Lemma}
\theoremstyle{remark}
\newtheorem{remark}[theorem]{Remark}
\theoremstyle{definition}
\newtheorem{defn}[theorem]{Definition}
\newtheorem{example}[theorem]{Example}
\newtheorem{prob}[theorem]{Problem}
\newcommand{\C}{\mathbb C}
\newcommand{\R}{{\mathbb R}}
\newcommand{\lla}{\left\langle}
\newcommand{\rra}{\right\rangle}
\newcommand{\mc}{\mathcal}
\newcommand{\tn}{\textnormal}
\newcommand{\Z}{\mathbb Z}
\newcommand{\F}{\mathbb F}
\begin{document}
	
\title{On operator Connes-amenability of the Fourier-Stieltjes algebra}

\author{Volker Runde}
\address{Department of Mathematical and Statistical Sciences, University of Alberta, Edmonton, AB, Canada}
\email{vrunde@ualberta.ca}

\author{Nico Spronk}
\address{Department of Pure Mathematics, University of Waterloo, Waterloo,
	ON, Canada}
\email{nico.spronk@uwaterloo.ca}

\author{Matthew Wiersma}
\address{Department of Mathematics and Statistics, University of Winnipeg, Winnipeg, MB, Canada}
\email{m.wiersma@uwinnipeg.ca}


\dedicatory{Dedicated to the memory of Zhong-Jin Ruan}

\begin{abstract}
	Runde and Spronk showed in 2004 that there are non-amenable groups $G$, including $\F_2$, {whose Fourier-Stieltjes algebra, $B(G)$,} is operator Connes-amenable. This result was surprising since the measure algebra $M(G)$ is Connes-amenable if and only if $G$ is amenable, which might lead one to guess that $B(G)$ should be operator Connes-amenable if and only if $G$ is amenable. This leads to the question: for which groups $G$ is $B(G)$ operator Connes-amenable? We make progress on this problem by {exhibiting} the first examples of groups {for which $B(G)$ is not operator Connes-amenable}. More specifically, we show that $B(G)$ is not operator Connes-amenable when $G$ is a non-compact locally compact group with property (T) and finite almost periodic compactification, or when $G$ is a discrete group without the factorization property.
\end{abstract}

\maketitle

\section{Introduction}

{Defined by the existence of a left invariant mean on $L^\infty(G)$, amenability of groups is one of the most important topics in the study of locally compact groups $G$. In his seminal 1972 paper \cite{J1}, Johnson defined a notion of amenability for Banach algebras by saying that a Banach algebra $A$ is \textit{amenable} if every bounded derivation into a dual $A$-bimodule is an inner derivation. He then proved that a locally compact group $G$ is amenable if and only if $L^1(G)$ is amenable. Amenable Banach algebras possess many desirable qualities, such as having bounded approximate identities, and, for C*-algebras, amenability coincides with the notion of nuclearity \cite{Co,Haag}. We refer the interested reader to the book \cite{Run-book} for further reading on amenability in the contexts of both groups and Banach algebras.}

{The Fourier algebra $A(G)$ and Fourier-Stieltjes algebra $B(G)$ of a locally compact group $G$ were introduced by Eymard in \cite{Ey}. The Fourier algebra $A(G)$ naturally identifies as the predual of the group von Neumann algebra $VN(G)$ and $B(G)$ as the dual space of the full group C*-algebra $C^*(G)$.
	As such, $A(G)$ and $B(G)$ are each operator spaces in addition to being Banach algebras.
	When $G$ is abelian, $A(G)$ and $B(G)$ can be identified with the group algebra $L^1(\widehat G)$ and measure algebra $M(\widehat G)$, respectively. This allows us to view $A(G)$ and $B(G)$ as dual objects to $L^1(G)$ and $M(G)$.}

{Due to the relationship between $L^1(G)$ and $A(G)$,} it is natural to conjecture that $A(G)$ should be amenable if and only if $G$ is amenable. Johnson showed, however, that this conjecture is false when he proved that $A(\tn{SU}(2))$ is non-amenable in 1994 (see \cite{J2}). This result was refined by Forrest and Runde in \cite{FR}, where they showed that $A(G)$ is an amenable Banach algebra if and only if $G$ contains an abelian subgroup of finite index.

Although the conjecture that $A(G)$ is amenable if and only if $G$ is amenable turned out to be false, Ruan showed that it was true in spirit when viewed through the correct lens. Following the development of abstract operator spaces, Ruan introduced the notion of operator amenability for completely contractive Banach algebras and showed in his 1995 paper \cite{Ruan} that $A(G)$ is operator amenable if and only if $G$ is amenable.

Connes introduced a notion of amenability for von Neumann algebras, which was later termed as Connes-amenability by Helemskii, that takes into account the weak*-topological structure of the von Neumann algebra in \cite{Co}. In his paper, Connes showed that a von Neumann algebra is Connes amenable if and only if it is injective. Runde later generalized this notion of Connes-amenability to the category of dual Banach algebras and showed the measure algebra $M(G)$ of a locally compact group $G$ is Connes-amenable if and only if $G$ is amenable.

The Fourier-Stieltjes algebra $B(G)$ of a locally compact group is readily seen as a dual object to $M(G)$ in a similar way as $A(G)$ is a dual object to $L^1(G)$, and it is generally recognized that both the weak* and operator space structures play a crucial role in understanding $B(G)$. As such, it is natural to conjecture that a locally compact group $G$ should be amenable if and only if $B(G)$ is ``operator Connes-amenable''. This notion of operator Connes-amenability was codified by Spronk and Runde in \cite{RunS}, where they showed that $B(G)$ is operator Connes-amenable whenever $G$ is an amenable locally compact group but also found that there are non-amenable groups $G$, such as free groups, for which $B(G)$ is also operator Connes-amenable. More precisely, it was proven that $B(G)$ is operator Connes-amenable whenever $G$ is a locally compact group where $C^*(G)$ is residually finite dimensional. This result is quite surprising and leads to the following question, which is the main topic of this paper.

\begin{prob}\label{prob:main}
	For which locally compact groups $G$ is $B(G)$ operator Connes-amenable?
\end{prob}

In the intervening years following Spronk and Runde's paper \cite{RunS}, no further progress on understanding Problem \ref{prob:main} has been made. It has been suggested by Dales that perhaps $B(G)$ might be operator Connes-amenable for every locally compact group \cite{Dales}. In this paper, we demonstrate $B(G)$ is not operator Connes-amenable for a large class of groups. In particular, we show that $B(G)$ is not operator Connes-amenable when $G$  is a non-compact locally compact group with property (T) and finite almost periodic compactification in section \ref{sec:LC}, or when $G$ is a discrete group without the factorization property in Section \ref{sec:discrete}.  In particular, $B(G)$ fails to be operator Connes-amenable for simple higher rank Lie groups and certain Kac-Moody groups. When $G$ is a discrete group, we are left with the question of whether the factorization property characterizes when $B(G)$ is operator-Connes-amenable. This is discussed in Section \ref{sec:end} along with several other open problems.

Although no further progress on Problem \ref{prob:main} has been made prior to this paper, there have been various other works studying stronger amenability properties of $B(G)$. Indeed, Runde and Uygul showed in 2015 that $B(G)$ is Connes-amenable if and only if $G$ contains an abelian subgroup of finite index \cite{RunU}, and Spronk showed in 2020 that if $G$ is connected, then $B(G)$ is operator amenable if and only if $G$ is compact \cite{Sp}. It is our belief, however, that these are incomplete notions of amenability for $B(G)$ since the first fails to account for $B(G)$'s operator space structure and the latter fails to account for $B(G)$'s weak* topology.

\section{Preliminaries}

\subsection{Operator Connes-amenability}

We introduce only the notions we will require in this subsection and refer the reader to the book \cite{Run-book} for a more comprehensive account on what is currently known on dual Banach algebras and Connes-amenability.

Recall that a \textit{dual Banach algebra} is a pair $(A,A_*)$ where $A$ is Banach algebra and $A_*$ is a predual of $A$ and multiplication in $A$ is separately $\sigma(A,A_*)$ continuous.

\begin{defn}[\cite{RunS}]
	Suppose that $A$ is a completely contractive dual Banach algebra. A dual
	completely bounded $A$-bimodule $E$ is \textit{normal} if the left and right module maps $A\to E$ are separately weak*-continuous.
\end{defn}

\begin{defn}[\cite{RunS}]
	A completely contractive dual Banach algebra $A$ is \textit{operator
		Connes-amenable} if every weak*-continuous, completely bounded derivation from $A$ into a
	normal, dual operator $A$-bimodule is inner.
\end{defn}

Just as amenable Banach algebras can be characterized in terms of a bounded approximate diagonal, operator Connes-amenability can also be characterized in terms of a type of diagonal. Before describing this characterization, we must define an ancillary space.

Given a dual Banach algebra $A$ and a Banach $A$-bimodule $E$, we let $C_{\sigma,w}(E)$ denote the set all elements $x\in E$ such that the module maps $A\to E$ given by $a\mapsto a\cdot x$ and $a\mapsto x\cdot a$ are weak*-weakly continuous.

Now suppose that $A$ is a completely contractive dual Banach algebra and let $m\colon A\widehat{\otimes} A\to A$ be the multiplication operator, where $\widehat\otimes$ denotes to operator space projective tensor product. Then $m^*$ maps $A_*$ into $C_{\sigma,w}((A\widehat{\otimes}A)^*)$. Consequently, $m^{**}$ induces a homomorphism $m_{\sigma,w}\colon C_{\sigma,w}((A\widehat{\otimes}A)^*)^*\to A$.

\begin{defn}
	Let $A$ be a dual, completely contractive Banach algebra. An element $\bm{D}\in C_{\sigma,w}((A\widehat{\otimes} A)^*)^*$ is an \textit{operator $C_{\sigma,w}$-virtual diagonal} if $a\cdot \bm{D}=\bm{D}\cdot a$ and $a (m_{\sigma,w}\bm D)=a$ for all $a\in A$.
\end{defn}

The following theorem is proven in \cite{Run2} for dual Banach algebras, but its proof also works for completely contractive dual Banach algebras when the obvious modifications are made.

\begin{theorem}
	Suppose $A$ is a completely contractive dual Banach algebra. Then $A$ is operator Connes-amenable if and only if $A$ admits a operator $C_{\sigma,w}$-virtual diagonal.
\end{theorem}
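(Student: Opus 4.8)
The plan is to follow the template of Runde's argument for ordinary dual Banach algebras and to verify at each step that the constructions survive the passage to the operator space category; apart from that bookkeeping, the proof is two short functional-analytic computations. Throughout I write $E=(E_*)^*$ for a normal dual operator $A$-bimodule and use the preamble's observation that $m^*$ carries $A_*$ into $C_{\sigma,w}((A\widehat{\otimes}A)^*)$, so that $m_{\sigma,w}$ is the (weak*-continuous, completely bounded) pre-adjoint of $m^*|_{A_*}$.

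For the direction asserting that a virtual diagonal forces operator Connes-amenability, suppose $\bm D$ is an operator $C_{\sigma,w}$-virtual diagonal and let $D\fn A\to E$ be a weak*-continuous completely bounded derivation. For each $\phi\in E_*$ I would form the functional $T_\phi$ on $A\widehat{\otimes}A$ determined by $T_\phi(a\otimes b)=\lla a\cdot D(b),\phi\rra$. The key lemma is that $T_\phi\in C_{\sigma,w}((A\widehat{\otimes}A)^*)$: unravelling the definitions, the two module maps $A\to (A\widehat{\otimes}A)^*$ attached to $T_\phi$ are assembled from $a\mapsto D(a)$ and the module actions on $E$, so their weak*-weak continuity is precisely the hypotheses that $D$ is weak*-continuous and $E$ is normal. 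Granting this, I define $x\in E$ by $\lla x,\phi\rra=\lla\bm D,T_\phi\rra$ and check that $\phi\mapsto\lla x,\phi\rra$ is bounded. A direct computation then gives $a\cdot x-x\cdot a=D(a)$: the derivation identity for $D$ turns the diagonal condition $a\cdot\bm D=\bm D\cdot a$ into the commutator, while $a(m_{\sigma,w}\bm D)=a$ supplies the remaining ``identity'' term. Hence every such $D$ is inner.

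For the converse I would first record that $C_{\sigma,w}((A\widehat{\otimes}A)^*)^*$ is a normal dual operator $A$-bimodule with predual $C_{\sigma,w}((A\widehat{\otimes}A)^*)$, and that $m_{\sigma,w}$ is a weak*-continuous completely bounded $A$-bimodule homomorphism. Next I would use that an operator Connes-amenable $A$ is unital, with identity $e$ obtained as a weak* cluster point of the bounded approximate identity that Connes-amenability provides. Let $\bm D_0$ be the canonical image of $e\otimes e$, so $m_{\sigma,w}(\bm D_0)=e$. Then $a\mapsto a\cdot\bm D_0-\bm D_0\cdot a$ is a weak*-continuous completely bounded derivation whose range lies in the weak*-closed submodule $\ker m_{\sigma,w}$, since $m_{\sigma,w}(a\cdot\bm D_0-\bm D_0\cdot a)=ae-ea=0$. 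Operator Connes-amenability yields $\bm N\in\ker m_{\sigma,w}$ implementing this derivation, and $\bm D:=\bm D_0-\bm N$ then satisfies $a\cdot\bm D=\bm D\cdot a$ and $m_{\sigma,w}(\bm D)=e$, so $\bm D$ is an operator $C_{\sigma,w}$-virtual diagonal.

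I expect the main obstacle to be the ``obvious modifications'' themselves: namely verifying that $C_{\sigma,w}((A\widehat{\otimes}A)^*)^*$ genuinely carries a \emph{normal} dual \emph{operator} bimodule structure and that the maps built above ($T_\phi$, the adjoint derivation $a\mapsto a\cdot\bm D_0-\bm D_0\cdot a$) are completely bounded rather than merely bounded, which requires care with the operator space projective tensor norm and with the identification of $C_{\sigma,w}$ as a weak*-closed subspace. A secondary point needing its own short argument is the unitality of an operator Connes-amenable $A$, since $\bm D_0$ and the identity term in the first direction both depend on having an actual identity $e\in A$.
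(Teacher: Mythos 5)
Your proposal reconstructs exactly the argument the paper points to: the paper gives no proof of its own but cites Runde's Math.\ Scand.\ paper \cite{Run2} and asserts it transfers to the operator space setting, and both halves of your outline follow Runde's template. Your second half (operator Connes-amenability implies a diagonal) is sound as sketched: $C_{\sigma,w}((A\widehat\otimes A)^*)^*$ is a normal dual bimodule, the commutator derivation at $\bm D_0$ lands in the weak*-closed submodule $\ker m_{\sigma,w}$, and subtracting an implementing element produces the diagonal. (One quibble: Connes-amenability does not directly ``provide'' a bounded approximate identity; the standard route to the unit applies Connes-amenability to $A$ itself with one of its two actions redefined to be zero, which produces a one-sided identity outright.)

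The genuine gap is in your first half, at the sentence ``a direct computation then gives $a\cdot x-x\cdot a=D(a)$.'' Carrying out that computation with your definitions and the standard conventions, what actually comes out, writing $u:=m_{\sigma,w}\bm D$, is
\[
\langle a\cdot x-x\cdot a,\phi\rangle \;=\; \langle a\cdot\bm D-\bm D\cdot a,\,T_\phi\rangle+\langle u\cdot D(a),\phi\rangle \;=\;\langle u\cdot D(a),\phi\rangle .
\]
The diagonal conditions make $u$ a two-sided identity for the \emph{algebra} $A$, but nothing makes $u$ act as the identity on the \emph{module} $E$: normal dual operator bimodules need not be unital. Concretely, if $E$ is any normal dual bimodule with its left action redefined to be zero, then every $T_\phi=0$, hence $x=0$, while nonzero weak*-continuous completely bounded derivations into such $E$ exist (namely $a\mapsto z\cdot a$); so the identity you assert is false in general. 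What is true is $a\cdot x-x\cdot a=u\cdot D(a)=D(a)-D(u)\cdot a$. The missing step is the standard unitalization: either decompose $E$ into the four corners cut out by $u$ (the corner projections $x\mapsto u\cdot x\cdot u$, etc., are weak*-continuous bimodule maps, and derivations into the corners on which one action is trivial are automatically inner, so one may assume $E$ is unital, after which your identity holds), or keep your $x$ and observe that the leftover derivation $a\mapsto D(u)\cdot a$ equals $a\mapsto(D(u)\cdot u)\cdot a$ and is implemented by $-D(u)\cdot u$, so that $D=\delta_{x-D(u)\cdot u}$ is inner after all.

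A secondary soft spot: your one-line justification of the key lemma $T_\phi\in C_{\sigma,w}((A\widehat\otimes A)^*)$ conflates the weak and weak* topologies of $(A\widehat\otimes A)^*$. Weak*-weak continuity of $c\mapsto c\cdot T_\phi$ is a statement about $\sigma((A\widehat\otimes A)^*,(A\widehat\otimes A)^{**})$, and composing weak*-continuous maps only yields weak*-weak* continuity, which is strictly weaker. The lemma is nevertheless true: the derivation identity splits $c\cdot T_\phi$ into a term $T_{\phi'}$, with $\phi'\in E_*$ a module translate of $\phi$, plus $m^*\eta_c$, where $\eta_c:=\langle\,\cdot\,D(c),\phi\rangle$ lies in $A_*$ by normality of $E$; then $c\mapsto\phi'$ and $c\mapsto\eta_c$ are weak*-weak continuous because the weak topologies of the preduals $E_*$ and $A_*$ are the pairings against $E$ and $A$ respectively, and the bounded maps $\phi\mapsto T_\phi$ and $m^*$ are weak-weak continuous. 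So normality and the weak*-continuity of $D$ do carry the day, but only through this factorization through the preduals, not by mere unravelling of definitions.
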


This implies, in particular, that if $A$ is operator Connes-amenable, then $A$ has a unit.

\subsection{Fourier-Stieltjes algebra}

We briefly review some basic facts about the Fourier-Stieltjes algebra, all of which are well known and can be found in \cite{Ey} or \cite{Ar}. See also the book \cite{Four-book}.

Suppose $G$ is a locally compact group. We let $\Sigma_G$ denote the class of all (SOT-continuous) unitary representations of $G$. Given $\pi\in \Sigma_G$ and $\xi,\eta\in \mc H_\pi$, we let $\pi_{\xi,\eta}\colon G\to \C$ denote the matrix coefficient defined by
$$ \pi_{\xi,\eta}(s)=\lla \pi(s)\xi,\eta\rra\qquad (s\in G).$$
The \textit{Fourier-Stieltjes algebra} is the set of all matrix coefficients
$$B(G)=\{\pi_{\xi,\eta} : \pi\in \Sigma_G,\ \xi,\eta\in \mc H_\pi\}.$$
Then $B(G)$ is a Banach algebra with respect to pointwise addition and multiplication, and norm
$$ \|u\|_{B(G)}=\inf\{\|\xi\|\|\eta\| : \pi\in \Sigma_G,\  \xi,\eta\in \mc H_\pi,\ u=\pi_{\xi,\eta}\}\qquad (u\in B(G)).$$
Furthermore, $B(G)=C^*(G)^*$ with respect to the dual pairing
$$\lla a,\pi_{\xi,\eta}\rra=\lla \pi(a)\xi,\eta\rra\qquad(a\in C^*(G),\ \pi\in \Sigma_G, \ \xi,\eta\in H_\pi).$$
This dual pairing imparts operator space and weak*-topological structures onto $B(G)$ with respect to which $B(G)$ is a completely contractive dual Banach algebra.

Given $\pi\in \Sigma_G$, we let $A_\pi$ and $B_\pi$ be the norm-closed and weak*-closed subspaces of $B(G)$ generated by
$$\{\pi_{\xi,\eta} : \xi,\eta\in H_\pi\},$$
respectively. Then $A_\pi$ and $B_\pi$ are invariant under left and right translation by $G$. Conversely, if $E$ is any nontrivial norm-closed subspace of $B(G)$ that is left and right translation invariant, then $E=A_\sigma$ for some $\sigma\in \Sigma_G$.
Furthermore,
$$B_\pi=\{\sigma_{\xi,\eta} : \sigma\in \Sigma_G,\ \sigma\text{ is weakly contained in }\pi,\ \xi,\eta\in\mc H_\pi\}.$$
The \textit{Fourier algebra} $A(G):=A_\lambda$, where $\lambda$ denotes the left regular representation of $G$, is then an ideal of $B(G)$.

Given $\pi\in \Sigma_G$, we consider the C*-algebra
$$C^*_\pi:=\overline{\tn{span}\{\pi(f) : f\in L^1(G)\}}^{\|\cdot\|}\subset B(\mc H_\pi).$$
This produces the reduced group C*-algebra when one considers $\pi=\lambda$ and the full group C*-algebra when takes $\pi$ to be the \textit{universal representation} of $G$:
$$\varpi=\oplus\{\sigma : \sigma\in\Sigma_G \text{ is a cyclic representation}\}.$$
Then $B_\pi$ is canonically the dual space of $C^*_\pi$ and
$$\lla \pi(f),u\rra=\int_G f(s)u(s)\,ds$$
for all $f\in L^1(G)$ and $u\in B_\pi$.

\section{Property (T) groups with finite almost periodic compactifications}\label{sec:LC}

This section will focus on groups $G$ where the almost periodic compactification $G^{\tn{AP}}$ is finite. When $G$ is connected, this reduces to the case when $G$ is \textit{minimally almost periodic}, i.e. $G^{\tn{AP}}$ is trivial. {The key technical condition on $G$ that we will require is that $G$ has property (T), i.e. that every unitary representation of $G$ with almost invariant vectors possesses invariant vectors. It is known that all higher rank semisimple Lie groups and their lattices possess property (T). Property (T) is a very useful tool since it can greatly restrict what types of actions a group can have on various spaces. We refer the reader to the book \cite{prop-T} for further reading on property (T).}

Let $G$ be a locally compact group. A unitary representation $\pi$ of $G$ is \textit{purely infinite} if $\pi$ contains no finite dimensional subrepresentations of $G$. The space $A_{\mc{PI}}(G)$ is defined to be the set of all matrix coefficients $\pi_{\xi,\eta}$ where $\pi$ is a purely infinite representation of $G$ and $\xi,\eta\in \mc H_\pi$. It is shown in \cite{RunS} that $A_{\mc PI}(G)$ is a norm-closed ideal of $B(G)$ and that
\begin{equation}\label{eqn:PI}
	B(G)=A_{\mc PI}(G)\oplus A(G^{\tn{AP}})\circ \iota
\end{equation}
where $\iota\colon G\to G^{\tn{AP}}$ is the canonical inclusion.

\begin{lemma}\label{lem:w*}
	Suppose $G$ is a locally compact group.
	\begin{enumerate}
		\item If $G$ has property (T), then $A_{\mc{PI}}(G)$ is weak*-closed in $B(G)$.
		\item If $G^{\tn{AP}}$ is finite and $A_{\mc{PI}}(G)$ is weak*-closed in $B(G)$, then $G$ has property (T).
	\end{enumerate}
\end{lemma}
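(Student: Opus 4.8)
The plan is to recast both statements in terms of a single central projection and then handle the two implications by different C*-algebraic means. Decompose the universal representation as $\varpi=\varpi_{\tn{fd}}\oplus\varpi_{\mc{PI}}$ into its largest finite-dimensional part and its purely infinite part, and let $z\in C^*(G)^{**}$ be the central projection onto the finite-dimensional summand. Since every finite-dimensional representation factors through $G^{\tn{AP}}$, the quotient $C^*(G)/\ker\varpi_{\tn{fd}}$ is canonically $C^*(G^{\tn{AP}})$, whose dual is exactly the summand $A(G^{\tn{AP}})\circ\iota$ of (\ref{eqn:PI}); in particular this summand is always weak*-closed. The first step is the elementary observation that $A_{\mc{PI}}(G)$ is weak*-closed if and only if the idempotent $Q$ projecting $B(G)$ onto $A(G^{\tn{AP}})\circ\iota$ in (\ref{eqn:PI}) is weak*-continuous, and that, upon passing to pre-adjoints, this occurs precisely when $z$ lies in the multiplier algebra $M(C^*(G))$, equivalently when $C^*(G)=\ker\varpi_{\mc{PI}}\oplus\ker\varpi_{\tn{fd}}$ is an internal C*-direct sum. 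I would also record the fact, valid for every $G$, that any irreducible weakly contained in $\varpi_{\tn{fd}}$ is a representation of $C^*(G^{\tn{AP}})$ and hence finite-dimensional.

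For part (1) I would begin from the characterization of property (T) as isolation of $1_G$ in $\hat G$ and upgrade it to isolation of every finite-dimensional irreducible $\sigma$ by the standard tensoring argument: if $\sigma$ failed to be isolated, a net $\sigma_i\to\sigma$ of irreducibles inequivalent to $\sigma$ would give $\sigma_i\otimes\overline\sigma\to\sigma\otimes\overline\sigma$, exhibiting $1_G$---which is contained in the limit $\sigma\otimes\overline\sigma$ but, by Schur's lemma, in none of the $\sigma_i\otimes\overline\sigma$---as a non-isolated point of $\hat G$, a contradiction. Isolation of $\sigma$ then rules out $\sigma\prec\varpi_{\mc{PI}}$, since a weakly contained isolated point is genuinely contained and $\varpi_{\mc{PI}}$ has no finite-dimensional subrepresentation. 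Combined with the always-true fact above, no irreducible is weakly contained in both $\varpi_{\tn{fd}}$ and $\varpi_{\mc{PI}}$, so the two kernels have disjoint hulls and are therefore coprime: $\ker\varpi_{\tn{fd}}+\ker\varpi_{\mc{PI}}=C^*(G)$. This is exactly the internal direct sum isolated in the first step, yielding $z\in M(C^*(G))$ and hence the weak*-closedness of $A_{\mc{PI}}(G)$.

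For part (2) the finiteness of $G^{\tn{AP}}$ makes $A(G^{\tn{AP}})\circ\iota$ finite-dimensional, so the hypothesis that $A_{\mc{PI}}(G)$ is weak*-closed makes it a weak*-closed subspace of finite codimension; writing it as the pre-annihilator of finitely many elements of $C^*(G)$ shows $Q$ is weak*-continuous, whence $z\in M(C^*(G))$. As $zC^*(G)\cong C^*(G^{\tn{AP}})$ is now finite-dimensional it is unital, forcing $z\in C^*(G)$, and the minimal central projection of this summand carried by the trivial representation is a Kazhdan projection in $C^*(G)$; its existence is equivalent to $1_G$ being isolated, i.e.\ to property (T). The main obstacle is the coprimality $\ker\varpi_{\tn{fd}}+\ker\varpi_{\mc{PI}}=C^*(G)$ in part (1): everything reduces to ruling out, via property (T), any finite-dimensional irreducible weakly contained in the purely infinite part, and the delicate point is the passage from isolation of $1_G$ to isolation of all finite-dimensional irreducibles, which must be carried out for nets rather than sequences in the Fell topology.
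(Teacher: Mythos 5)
Your proof is correct, and it reaches the result by a more algebraic route than the paper does. The paper stays in the Fell topology throughout: for (1) it picks $\pi$ purely infinite with $A_\pi=A_{\mc{PI}}(G)$, notes that failure of weak*-closedness forces $\pi$ to weakly contain a finite-dimensional representation, and then quotes \cite[Lemma 1.2.4]{prop-T} and \cite[Theorem 1.2.5]{prop-T} to contradict property (T); for (2) it observes that $\widehat{G}_{\tn{f.d.}}$ is finite and discrete, that weak*-closedness of $A_{\mc{PI}}(G)$ makes it open, and hence that $1_G$ is isolated. You instead encode weak*-closedness as the algebraic statement $C^*(G)=\ker\varpi_{\mc{PI}}\oplus\ker\varpi_{\tn{fd}}$ (equivalently $z\in M(C^*(G))$), prove this in (1) by coprimality of ideals with disjoint hulls, and in (2) extract from it an honest Kazhdan projection in $C^*(G)$. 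Since the Fell topology is the hull--kernel topology, the two proofs run on the same engine --- the equivalence of property (T) with isolation of finite-dimensional irreducibles, i.e.\ \cite[Theorem 1.2.5]{prop-T} --- but yours is more self-contained (the tensoring trick reproves the isolation statement the paper cites, and your matrix-ideal argument substitutes for \cite[Lemma 1.2.4]{prop-T}), and it makes the underlying structure explicit: the finite-dimensional direct summand $\cong C^*(G^{\tn{AP}})$ and the Kazhdan projection, which is a genuinely stronger output in (2). What the paper's version buys is brevity. Three points should be made precise if you write this up: (i) ``a weakly contained isolated point is genuinely contained'' is justified here only because $\sigma$ is finite dimensional --- the open singleton $\{\sigma\}$ corresponds to an ideal $I\cong M_d(\C)$ with $\sigma(I)\neq 0$, and $\sigma\prec\pi$ forces $\pi(I)\neq 0$, hence $\sigma\subset\pi$ --- so this hypothesis should be invoked explicitly; (ii) the nontrivial direction of your ``first step'' (weak*-closedness implies the internal direct sum) is not pure formal duality: duality only gives that $\ker\varpi_{\mc{PI}}+\ker\varpi_{\tn{fd}}$ is dense, and you need the C*-fact that a sum of two closed ideals is automatically norm-closed; (iii) in the tensoring argument the $\sigma_i$ need not be finite dimensional and $\sigma_i\otimes\bar\sigma$ is not irreducible, so what you actually contradict is property (T) in its weak-containment form --- an invariant vector in $\bigoplus_i(\sigma_i\otimes\bar\sigma)$ has an invariant component in some summand, contradicting Schur --- rather than literally exhibiting $1_G$ as a non-isolated point of $\widehat{G}$. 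All three repairs are one-liners; the architecture of your proof is sound.
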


\begin{proof}
	(1) Let $\pi\in \Sigma_G$ be chosen so that $A_{\mc{PI}}(G)=A_\pi$. Then $\pi$ is purely infinite. If $A_{\mc{PI}}(G)$ is not weak*-closed, then $\pi$ weakly contains some finite dimensional representation of $G$. In particular, some finite dimensional irreducible representation of $G$ is not an isolated point of $\widehat G$ in the Fell topology \cite[Lemma 1.2.4]{prop-T}, so $G$ does not have property (T) (see \cite[Theorem 1.2.5]{prop-T}).
	
	(2) Suppose that $G$ has finite almost periodic compactification and let $\widehat{G}_{\tn{f.d.}}$ denote the set of finite dimensional irreducible representations of $G$. Then $\widehat{G}_{\tn{f.d.}}$ is a discrete space when endowed with the relative Fell topology from $\widehat{G}$. Further supposing that $A_{\mc{PI}}(G)$ is weak*-closed, one obtains that $\widehat{G}_{\tn{f.d.}}$ is an open subset of $\widehat{G}$ by \cite[Theorem 1.2.5]{prop-T}. It follows that the trivial representation is isolated in $\widehat G$ and, hence, $G$ has property (T).
\end{proof}

Equipped with the previous lemma, we are ready to find our first class of groups $G$ where $B(G)$ is not operator Connes-amenable.

\begin{theorem}\label{thm:LC-main}
	Suppose $G$ is a non-compact locally compact group with property (T) and finite almost periodic compactification $G^{\tn{AP}}$. Then $B(G)$ is not operator Connes-amenable.
\end{theorem}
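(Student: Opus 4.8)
The plan is to locate inside $B(G)$ a weak*-closed ideal that would be forced to carry an identity if $B(G)$ were operator Connes-amenable, but which manifestly has none because $G$ is non-compact. Write $I:=A_{\mc{PI}}(G)$. Since $G$ has property (T), Lemma \ref{lem:w*}(1) shows that $I$ is weak*-closed in $B(G)$, so $I$, with the relative weak* topology, is a completely contractive dual Banach algebra and a weak*-closed ideal of $B(G)$. Because $G^{\tn{AP}}$ is finite, $A(G^{\tn{AP}})$ is finite-dimensional, so the splitting \eqref{eqn:PI} exhibits $I$ as a finite-codimensional weak*-closed ideal whose complement $N:=A(G^{\tn{AP}})\circ\iota$ is a unital subalgebra carrying the identity $1$ of $B(G)$. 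Concretely this decomposition is dual to a C*-direct sum $C^*(G)=C^*_\pi\oplus C^*(G^{\tn{AP}})$, so the projection $P\colon B(G)\to I$ along $N$ is the adjoint of the inclusion $C^*_\pi\hookrightarrow C^*(G)$ and is therefore weak*-continuous and completely contractive, while $Q:=\id-P$ is a completely contractive homomorphism onto $N$.

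First I would record that $I$ has no identity. Since $G$ is non-compact, the regular representation is purely infinite, so $A(G)\subseteq I$; in particular $I\neq\{0\}$ and, as $A(G)$ is a regular function algebra, for every $s\in G$ there is $v\in I$ with $v(s)\neq 0$. If $e\in I$ were an identity for $I$, then $e(s)v(s)=v(s)$ would force $e(s)=1$ for all $s$, i.e.\ $e$ equals the constant function $1$; but $1\in N$ and $I\cap N=\{0\}$, a contradiction. Hence $I$ is a nonzero, non-unital dual Banach algebra, and this is exactly where non-compactness of $G$ enters.

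The heart of the argument is to show that operator Connes-amenability of $B(G)$ descends to the complemented weak*-closed ideal $I$. Assuming $B(G)$ is operator Connes-amenable, it carries an operator $C_{\sigma,w}$-virtual diagonal $\bm D\in C_{\sigma,w}((B(G)\widehat{\otimes}B(G))^*)^*$. I would compress $\bm D$ through $P$: because $P$ is the adjoint of a completely contractive map on the C*-predual side, the map $P\widehat{\otimes}P\colon B(G)\widehat{\otimes}B(G)\to I\widehat{\otimes}I$ is weak*-continuous and completely contractive, and its bidual carries $\bm D$ into $C_{\sigma,w}((I\widehat{\otimes}I)^*)^*$. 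Using that $I$ is an ideal, so that all the relevant products remain in $I$, together with $Q$ being a homomorphism, one checks that the compressed element $\bm D_I$ satisfies $a\cdot\bm D_I=\bm D_I\cdot a$ for $a\in I$ and $a\,(m_{\sigma,w}\bm D_I)=a$ for $a\in I$; that is, $\bm D_I$ is an operator $C_{\sigma,w}$-virtual diagonal for $I$. In particular $e_I:=m_{\sigma,w}\bm D_I\in I$ is a two-sided identity for $I$, contradicting the previous paragraph.

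The step I expect to be the main obstacle is this descent, namely verifying that the compressed $\bm D_I$ genuinely lies in the ancillary space $C_{\sigma,w}((I\widehat{\otimes}I)^*)^*$ and satisfies the two diagonal identities relative to the non-unital ideal $I$. The complementation and the ideal property of $I$ are precisely what make the bookkeeping work: the cross terms produced by $P$ are absorbed by $I$, and the weak*-continuity needed for membership in $C_{\sigma,w}$ is inherited because $P$, $Q$ and $P\widehat{\otimes}P$ are all adjoints of maps between the underlying C*-algebras. Once the descent is in hand, the contradiction with non-unitality—driven by non-compactness of $G$—closes the proof.
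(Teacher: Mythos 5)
Your overall frame---exhibit $A_{\mc{PI}}(G)$ as a weak*-closed, non-unital ideal and contradict the unit that operator Connes-amenability forces---is the same one driving the paper's proof, and your non-unitality argument (via $A(G)\subseteq A_{\mc{PI}}(G)$ for non-compact $G$) is correct; it even fills in a detail the paper leaves implicit. The genuine gap is exactly the step you flagged as ``the main obstacle'': the descent of the virtual diagonal to $I=A_{\mc{PI}}(G)$, and it is not bookkeeping. The element $\bm D$ is a functional only on the subspace $C_{\sigma,w}\big((B(G)\widehat{\otimes}B(G))^*\big)$, so to define $\bm D_I$ you must show that $\phi\circ(P\widehat{\otimes}P)$ lies in $C_{\sigma,w}\big((B(G)\widehat{\otimes}B(G))^*\big)$ for every $\phi\in C_{\sigma,w}\big((I\widehat{\otimes}I)^*\big)$. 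But membership of $\phi$ in $C_{\sigma,w}$ only gives weak*-continuity of maps of the form $u\mapsto\phi(v\otimes wu)$ and $u\mapsto\phi(uv\otimes w)$ with \emph{both} $v,w\in I$ buffering the action, and expanding $P(ca)$ produces unbuffered terms. Already in the simplest case $G^{\tn{AP}}=\{e\}$, where $P=\id-\chi(\cdot)1_G$ with $\chi$ the character of $B(G)$ with kernel $I$, one computes
$\big[a\cdot\big(\phi\circ(P\widehat{\otimes}P)\big)\big](b\otimes c)=\phi(Pb\otimes Pc\,Pa)+\chi(a)\,\phi(Pb\otimes Pc)+\chi(c)\,\phi(Pb\otimes Pa)$,
and the last term requires $u\mapsto\phi(Pb\otimes u)$ to be weak*-continuous on $I$, i.e.\ separate weak*-continuity of $\phi$ itself---which is not part of the definition of $C_{\sigma,w}$ and has no reason to hold. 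Independently of this, the compressed element fails the commutation identity: writing $\bm D$ heuristically as $\sum_i b_i\otimes c_i$, one finds for $a\in I$ that $a\cdot\bm D_I-\bm D_I\cdot a=\sum_i Pb_i\otimes Q(c_i)a-\sum_i aQ(b_i)\otimes Pc_i$. These cross terms do lie in $I\widehat{\otimes}I$, but they do not vanish; being ``absorbed by $I$'' does not remove them from the commutator. For finite nontrivial $G^{\tn{AP}}$ both problems recur, with the minimal idempotents of $N$ in place of $1_G$.

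The paper avoids this difficulty by never compressing a diagonal. It first passes to the open finite-index subgroup $G'=\ker(G\to G^{\tn{AP}})$, which it shows is minimally almost periodic, so that $B(G')=A_{\mc{PI}}(G')\oplus\C 1_{G'}$ is literally the unitization of the ideal; it then runs the descent at the level of \emph{derivations}: since $A_{\mc{PI}}(G')$ is non-unital it is not operator Connes-amenable, so there is a non-inner weak*-continuous completely bounded derivation $D$ on it, and $D$ extends to $B(G')$ by letting $1$ act as the identity on the module and setting $\widetilde D(u+\lambda 1)=D(u)$; the extension is still weak*-continuous, completely bounded and non-inner. Extending a derivation only requires extending the module action by scalars---trivial---whereas your diagonal compression requires the problematic $C_{\sigma,w}$-membership. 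Finally the paper returns to $B(G)$ via the weak*-continuous surjective homomorphism $B(G)\to B(G')$ of restriction, under which operator Connes-amenability passes to the image. To repair your argument, replace the compression step by this derivation-level extension, and either treat the finitely many idempotents of $N$ directly or reduce to $G'$ as the paper does.
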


\begin{proof}
	Set $G'$ be the kernel of the group homomorphism $G\to G^{\tn{AP}}$. Then $G'$ has finite index inside of $G$. We claim that the $G'$ is minimally almost periodic. Indeed, if $\pi$ is any finite dimensional unitary representation of $G'$, then $\sigma:=\tn{Ind}_{G'}^G\pi$ is a finite dimensional unitary representation of $G$, hence $\sigma(g)=1$ for all $g\in G'$. It follows that $G'$ has trivial almost periodic compactification since $\pi$ is a subrepresentation of $\sigma|_{G'}$. So $B(G')=A_{\mc{PI}}(G')\oplus \C1_H$ by Equation \eqref{eqn:PI}.
	
	Note that $G'$ is non-compact and has property (T) since $G$ possesses these properties and $G'$ has finite index inside of $G$. Consequently, $A_{\mc{PI}}(G')$ is weak*-closed inside of $B(G')$ by Lemma \ref{lem:w*}. Further, $A_{\mc{PI}}(G')$ is not operator Connes-amenable since $A_{\mc{PI}}(G')$ is not unital. It follows that $B(G')=A_{\mc{PI}}(G')\oplus \C1_{G'}$ is not operator Connes-amenable either. Indeed, suppose that there exists a normal completely bounded dual $A_{\mc{PI}}(G')$-bimodule $E$ and a weak*-continuous, completely bounded derivation $D\colon A_{\mc{PI}}(G')\to E$ that is not inner. Then $E$ is naturally a normal completely bounded dual $B(G')$-bimodule with respect to the actions
	$$(u+\lambda 1)\cdot x=u\cdot x+\lambda x\quad\text{and}\quad x\cdot(u+\lambda 1)=x\cdot u+\lambda x$$
	for $u\in A_{\mc{PI}}(G')$, $\lambda\in \C, x\in E$,
	and $\widetilde{D}\colon B(G')\to E$ defined by
	$$\widetilde D(u+\lambda 1)=D(u)$$
	for $u\in u\in A_{\mc{PI}}(G')$, $\lambda\in \C$
	is a weak*-continuous, completely bounded derivation. Since $D$ is not an inner derivation, neither is $\widetilde D$. So $B(G')$ is not operator Connes-amenable.
	To deduce that $B(G)$ is not operator Connes-amenable, we simply note that the restriction map $B(G)\to B(G')$ is a weak*-continuous quotient map as $G'$ is an open subgroup of $G$.
\end{proof}

\begin{example}
	Suppose $G$ is a connected real Lie group and $R$ is the radical of $G$. Rothman showed in \cite{Roth} that $G$ is minimally almost periodic if and only if $G/R$ is semisimple without compact factors and $G=\overline{[G,G]}$. So if $G$ satisfies these conditions and has property (T), then $B(G)$ is not operator Connes-amenable. In particular, if $G$ is a non-compact simple Lie group with property (T), then $B(G)$ is not operator Connes-amenable.  This holds, for example, when $G=\tn{SL}(n,\R)$ for some $n\geq 3$ or, more generally, when $G$ is any higher rank real Lie group.
\end{example}

The previous theorem applies to certain discrete groups as well.

\begin{example}\label{ex:Gromov}
	Gromov showed that there exists uncountably many minimally almost periodic discrete torsion groups with property (T) in \cite{Grom}. If $G$ is such a group, then $B(G)$ is not operator Connes-amenable.
\end{example}

The examples of Gromov considered above can be criticized for being difficult to understand. Thankfully, Theorem \ref{thm:LC-main} can also be used to show that $B(G)$ fails to be operator Connes-amenable for certain discrete groups $G$ that can be concretely described. We briefly outline one such class of examples below.

\begin{example}
	Suppose that $G$ is a finitely generated infinite discrete group and that $G$ admits a (not necessarily unitary) finite dimensional representation $\pi\colon G\to M_n(\C)$ with $\ker \pi\neq G$. Then $\pi(G)$ is a residually finite group by Mal'cev's theorem, hence $G$ cannot be simple. Consequently, every infinite simple finite generated discrete group is minimally almost periodic. In particular, if $G$ is an infinite simple group discrete group with property (T), then $B(G)$ is not operator Connes amenable.

	Caprace and R\'emy show that certain non-affine Kac-Moody groups $G$ over finite ground fields satisfy the above hypotheses in \cite{CapR}. They also give the first example of a finitely presentable simple infinite group with property (T) in the same paper. We refer the interested reader to Caprace and R\'emy's paper for precise descriptions of such groups {and} note that these examples can be very explicitly described.
\end{example}

A particularly important class of locally compact groups that are minimally almost periodic are those non-compact groups $G$ that possess the Howe-Moore property. Recall that a locally compact group $G$ has the \textit{Howe-Moore property} if every unitary representation $\pi$ of $G$ either has nonzero invariant vectors or all matrix coefficients of $\pi$ vanish at infinity, i.e. $\pi_{\xi,\eta}\in C_0(G)$ for all $\xi,\eta\in \mc H_\pi$. Examples of groups with the Howe-Moore property include the following.
\begin{enumerate}
	\item Simple real Lie groups with finite centre (see \cite{HM}).
	\item Isotropic simple algebraic groups over non Archimedean local fields (see \cite{HM}).
	\item Closed subgroups $G$ of $\tn{Aut}\, T$ where $T$ is a semi-homogeneous tree where the degree of vertex is at least 3 and $G$ acts $2$-transitively on the boundary of $T$ (see \cite{BM}). 
	\item Strongly transitive, type preserving closed subgroups of $\tn{Aut}\,\Delta$, where $\Delta$ is a locally finite thick affine building (see \cite{C2}).
\end{enumerate}
See also \cite{C1} for a unified proof of (1), (2) and (3).

We include the following characterization of groups with the Howe-Moore property that may be of independent interest. It, in particular, shows that every group with the Howe-Moore {property} is minimally almost periodic, a fact that is surely known to experts.

Recall that the \textit{Rajchman algebra} of $G$ is $B_0(G):=B(G)\cap C_0(G)$.

\begin{prop}
	Let $G$ be a non-compact locally compact group. The following are equivalent.
	\begin{enumerate}
		\item $G$ has the Howe-Moore property,
		\item $B(G)=B_0(G)\oplus \C1_G$,
		\item $G$ is minimally almost periodic and $A_{\mc{PI}}(G)=B_0(G)$.
	\end{enumerate}
\end{prop}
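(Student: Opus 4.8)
The plan is to prove the cycle $(1)\Rightarrow(3)\Rightarrow(2)\Rightarrow(1)$, using the decomposition \eqref{eqn:PI} throughout together with the elementary observation that on a non-compact group a nonzero almost periodic function cannot lie in $C_0(G)$: if $f$ is almost periodic and $f\in C_0(G)$, then its invariant mean satisfies $M(|f|^2)=0$, whereas $M(|f|^2)=\int_{G^{\tn{AP}}}|\tilde f|^2>0$ when $f\neq 0$. I abbreviate this as $\mathrm{AP}(G)\cap C_0(G)=\{0\}$.

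For $(1)\Rightarrow(3)$ I would first check that the Howe-Moore property forces $G$ to be minimally almost periodic. Any nonzero finite dimensional unitary representation has a nonzero matrix coefficient, which is a nonzero almost periodic function and hence cannot vanish at infinity; so by Howe-Moore every such representation has nonzero invariant vectors, and peeling these off (the orthogonal complement of the invariants is again finite dimensional with no invariants) shows the representation is trivial. Thus $G^{\tn{AP}}$ is trivial and \eqref{eqn:PI} reads $B(G)=A_{\mc{PI}}(G)\oplus\C1_G$. Writing $A_{\mc{PI}}(G)=A_\pi$ for the maximal purely infinite $\pi$, I note $\pi$ has no invariant vectors (a trivial subrepresentation would be a finite dimensional subrepresentation), so Howe-Moore gives $A_{\mc{PI}}(G)=A_\pi\subseteq C_0(G)$, i.e. $A_{\mc{PI}}(G)\subseteq B_0(G)$. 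Conversely, for $u\in B_0(G)$ I decompose $u=a+c1_G$ with $a\in A_{\mc{PI}}(G)\subseteq C_0(G)$; then $c1_G=u-a\in C_0(G)$ forces $c=0$ since $G$ is non-compact, so $u=a\in A_{\mc{PI}}(G)$. Hence $A_{\mc{PI}}(G)=B_0(G)$, which is $(3)$. The implication $(3)\Rightarrow(2)$ is then immediate: minimal almost periodicity reduces \eqref{eqn:PI} to $B(G)=A_{\mc{PI}}(G)\oplus\C1_G$, and substituting $A_{\mc{PI}}(G)=B_0(G)$ gives $(2)$.

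The implication $(2)\Rightarrow(1)$ is where the real work lies. Assuming $B(G)=B_0(G)\oplus\C1_G$, minimal almost periodicity follows quickly: any matrix coefficient $u$ of a finite dimensional representation is almost periodic, and writing $u=u_0+c1_G$ with $u_0\in B_0(G)$ shows $u-c1_G\in\mathrm{AP}(G)\cap C_0(G)=\{0\}$, so every matrix coefficient of every finite dimensional representation is constant, forcing all such representations to be trivial and $G^{\tn{AP}}$ to be trivial. Now let $\pi$ be any representation with no nonzero invariant vectors. By minimal almost periodicity $\pi$ has no finite dimensional subrepresentation at all, hence is purely infinite, so $A_\pi\subseteq A_{\mc{PI}}(G)$. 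To conclude that the coefficients of $\pi$ vanish at infinity it therefore suffices to show $A_{\mc{PI}}(G)\subseteq C_0(G)$ under hypothesis $(2)$, and for this I will establish the general containment $B_0(G)\subseteq A_{\mc{PI}}(G)$, valid for every locally compact $G$: granting it, every $a\in A_{\mc{PI}}(G)$ decomposes as $a=u_0+c1_G$ with $u_0\in B_0(G)\subseteq A_{\mc{PI}}(G)$, whence $c1_G=a-u_0\in A_{\mc{PI}}(G)\cap\C1_G=\{0\}$ and $a=u_0\in B_0(G)\subseteq C_0(G)$.

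I expect the general containment $B_0(G)\subseteq A_{\mc{PI}}(G)$ to be the main obstacle, since $A_{\mc{PI}}(G)$ is genuinely larger than $B_0(G)$ in general (already for $G=\R$ it is the algebra of Fourier-Stieltjes transforms of continuous measures, which need not vanish at infinity), so no soft dimension count can close the gap. The approach I favor is to realize \eqref{eqn:PI} as the restriction to $B(G)$ of the de~Leeuw-Glicksberg decomposition $\mathrm{WAP}(G)=\mathrm{AP}(G)\oplus\mathrm{WAP}_0(G)$: every element of $B(G)$ is weakly almost periodic by Eberlein's theorem, the summand $A(G^{\tn{AP}})\circ\iota$ consists precisely of the almost periodic elements of $B(G)$, and $A_{\mc{PI}}(G)$, being the coefficients of representations with no finite dimensional subrepresentation, lands in the flight component $\mathrm{WAP}_0(G)$. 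Since $C_0(G)\cap\mathrm{WAP}(G)\subseteq\mathrm{WAP}_0(G)$, any $u\in B_0(G)=B(G)\cap C_0(G)$ has vanishing almost periodic part, i.e. $u\in A_{\mc{PI}}(G)$. The points requiring care are the two identifications, namely that $A(G^{\tn{AP}})\circ\iota=B(G)\cap\mathrm{AP}(G)$ and that $A_{\mc{PI}}(G)\subseteq\mathrm{WAP}_0(G)$, which amount to matching the representation-theoretic splitting \eqref{eqn:PI} with the dynamical de~Leeuw-Glicksberg splitting; once these are in place the containment, and with it $(2)\Rightarrow(1)$, follows.
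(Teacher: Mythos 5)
Your proposal is correct, but it is routed differently from the paper's proof. The paper proves the cycle $(1)\Rightarrow(2)\Rightarrow(3)\Rightarrow(1)$: for $(1)\Rightarrow(2)$ it decomposes an arbitrary coefficient $u=\pi_{\xi,\eta}$ using the orthogonal projection $P$ onto the $\pi(G)$-invariant vectors, writing $u=\lla P\xi,P\eta\rra 1_G+\pi_{P^\perp\xi,P^\perp\eta}$ with the second summand in $B_0(G)$ by Howe--Moore; for $(2)\Rightarrow(3)$ it quotes the containment $B_0(G)\subset A_{\mc{PI}}(G)$ as known and plays it off against \eqref{eqn:PI} by direct-sum bookkeeping; and $(3)\Rightarrow(1)$ is the short purely-infinite argument. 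Your cycle $(1)\Rightarrow(3)\Rightarrow(2)\Rightarrow(1)$ rests on the same two ingredients but distributes them differently: your $(1)\Rightarrow(3)$ is a genuine alternative that obtains $A_{\mc{PI}}(G)=B_0(G)$ directly from Howe--Moore (applied to the single purely infinite $\pi$ with $A_\pi=A_{\mc{PI}}(G)$) without ever invoking the general containment, while your $(2)\Rightarrow(1)$ needs $B_0(G)\subseteq A_{\mc{PI}}(G)$ --- precisely the fact the paper cites without proof. Your de~Leeuw--Glicksberg strategy for it does work: both identifications you flag are true, the substantive one being that coefficients of purely infinite representations are flight functions, which follows from the Jacobs--de~Leeuw--Glicksberg splitting of a unitary representation into almost periodic and weakly mixing parts. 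But it is heavier than necessary. A shorter route uses the paper's own preliminaries: $B_0(G)$ is a norm-closed, translation-invariant subspace of $B(G)$, hence (if nonzero) equals $A_\sigma$ for some $\sigma\in\Sigma_G$; if $\sigma$ had a finite-dimensional subrepresentation, its nonzero coefficients would be almost periodic functions lying in $A_\sigma\subseteq C_0(G)$, contradicting your own lemma $\tn{AP}(G)\cap C_0(G)=\{0\}$; so $\sigma$ is purely infinite and $B_0(G)=A_\sigma\subseteq A_{\mc{PI}}(G)$.

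Two cautions. First, your parenthetical claim that $B_0(G)\subseteq A_{\mc{PI}}(G)$ is ``valid for every locally compact $G$'' is false: for compact $G$ one has $B_0(G)=B(G)\neq\{0\}$ while $A_{\mc{PI}}(G)=\{0\}$. Non-compactness (which the proposition assumes) is essential, and your own argument silently uses it in the step $C_0(G)\cap\tn{WAP}(G)\subseteq\tn{WAP}_0(G)$, which also fails for compact groups. Second, as written you leave the two de~Leeuw--Glicksberg identifications as ``points requiring care''; to make the proof complete you should either carry them out or substitute the Arsac-type argument above, which is self-contained given the facts stated in the paper's preliminaries.
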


\begin{proof}
	(1) $\Rightarrow$ (2): Suppose that $G$ has the Howe-Moore property and $u\in B(G)$. Then $u=\pi_{\xi,\eta}$ for some unitary representation $\pi$ and $\xi,\eta\in \mc H_\pi$. Let $\mc K$ denote the subspace of $G$-invariant vectors in $\mc H_\pi$ and let $P\in B(\mc H_\pi)$ be the orthogonal compliment onto $\mc K$. Then $u=\lambda 1_G+\pi_{P^{\perp}\xi,P^{\perp}\eta}$ where $\lambda=\lla P\xi,P\eta\rra$. Since $\pi(\cdot)|_{\mc K^{\perp}}$ has no nonzero invariant vectors, we deduce that $\pi_{P^{\perp}\xi,P^{\perp}\eta}\in B_0(G)$. The fact that $B_0(G)\cap \C1_G=\{0\}$ allows us to  conclude that $B(G)=B_0(G)\oplus \C1_G$.
	
	(2) $\Rightarrow$ (3): Suppose $B(G)=B_0(G)\oplus \C1_G$. As $B_0(G)\subset A_{\mc{PI}}(G)$, $\C1_G\subset A(G^{\tn{AP}})\circ\iota$ and $B(G)=A_{\mc{PI}}(G)\oplus A(G^{\tn{AP}})\circ\iota$, we deduce that $A_{\mc{PI}}(G)=B_0(G)$ and $A(G^{\tn{AP}})\circ\iota=\C1_G$.
	
	(3) $\Rightarrow$ (1): Suppose that $A_{\mc{PI}}(G)=B_0(G)$ and $G^{\tn{AP}}$ is trivial. Let $\pi$ be a unitary representation of $G$ that has no nontrivial invariant vectors. Then $\pi$ does not contain a copy of $1_G$. Since $1_G$ is the only finite dimensional irreducible representation of $G$, we deduce that $\pi$ is purely infinite. So $A_\pi\subset A_{\mc{PI}}(G)=B_0(G)\subset C_0(G)$.
\end{proof}

The fact that groups with the Howe-Moore property are minimally almost periodic allows us to give additional examples of groups where $B(G)$ fails to be operator Connes-amenable.

\begin{example}\ 
	\begin{enumerate}
		\item Let $G$ be an isotropic simple algebraic group over a non Archimedean local fields. If $G$ has property (T), then $B(G)$ is not operator Connes-amenable. This holds, for example, when $G=\tn{SL}(n,k)$ for some $n\geq 3$ where $k$ is a local field.
		\item Suppose $\Delta$ is a thick affine building of dimension at least 2 and $G$ is a strongly transitive, type preserving closed subgroup of $\tn{Aut}\,\Delta$. Then $G$ satisfies the Howe-Moore property by \cite{C2} and has property (T) by \cite{Op}. So $B(G)$ is not operator Connes-amenable.
	\end{enumerate}
\end{example}

\begin{example}
	Suppose that $G=\tn{SL}(n,\mathbb K)\ltimes \mathbb K^n$ where $n\geq 3$ and $\mathbb K$ is $\R$, $\C$ or a local field. Then $G$ has property (T) by \cite[Corollary 1.4.16]{prop-T} and we claim that $G$ is minimally almost periodic. Indeed, suppose that $\pi$ is a finite dimensional representation of $G$. Then $\pi(g)=1$ for all $g\in \tn{SL}(n,\mathbb K)$ since $\tn{SL}(n,\mathbb K)$ has the Howe-Moore property and, hence, is minimally almost periodic. Since the action of $\tn{SL}(n,\mathbb K)$ on $\mathbb K^n\setminus \{0\}$ is transitive, it follows that $\pi(v)=1$ for all $v\in \mathbb K^n$. So $\pi$ is an amplification of the trivial representation, showing that $G$ is minimally almost periodic. It follows that $B(G)$ is not operator Connes-amenable.
\end{example}

\section{Discrete Groups}\label{sec:discrete}

A locally compact group $G$ has the \textit{factorization property} if the unitary representation $\lambda\cdot \rho\colon G\to B(L^2(G))$ defined by $(\lambda\cdot \rho)(s)=\lambda(s)\rho(s)$ for $s\in G$ extends to $*$-representation of $C^*(G)\otimes_{\min}C^*(G)$. Here $\lambda$ and $\rho$ denote the left and right regular representations of $G$, respectively.
Every residually finite discrete group has the factorization property (e.g., see \cite[Proposition 7.3]{Oz}) and, conversely, every property (T) discrete group with the factorization property is residually finite \cite{k} (for a simple proof, see \cite[Theorem 7.4]{Oz}). Theorem {\ref{thm:discrete-main}} implies that if $G$ is discrete and $B(G)$ is operator Connes-amenable, then $G$ has the factorization property.

The following remark is well-known, but is included for the sake of self-containment.

\begin{remark}\label{rem:FP}
	Let $G$ be a discrete group and $\Delta$ be the diagonal subgroup of $G\times G$. Then $G$ has the factorization property if and only if the positive definite function $1_\Delta$ extends to a state on $C^*(G)\otimes_{\min}C^*(G)$. Indeed, this follows from the fact that $\lambda\cdot \rho$ is the GNS representation of $1_\Delta$ as $\delta_e\in \ell^2(G)$ is a cyclic vector for $\lambda\cdot \rho$ and $1_\Delta=(\lambda\cdot \rho)_{\delta_e,\delta_e}$.
\end{remark}

The main theorem of this section considers weak*-closed subalgebras of $B(G)$ that contain the Fourier algebra $A(G)$. The condition that such an algebra should contain $A(G)$ occurs very naturally in practice. Indeed, every complex conjugation invariant, point separating, weak*-closed, translation invariant subalgebra of $B(G)$ contains $A(G)$ \cite[Theorem 1.3.]{BLS}.

\begin{theorem}\label{thm:discrete-main}
	Let $G$ be a discrete group and $B$ a weak*-closed subalgebra of $B(G)$ that contains $A(G)$. If $B$ is operator Connes-amenable, then $G$ has the factorization property.
\end{theorem}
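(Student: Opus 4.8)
The plan is to combine the virtual-diagonal characterization of operator Connes-amenability with the description of the factorization property from Remark \ref{rem:FP}. First I would observe that $B$, being a weak*-closed subalgebra of the completely contractive dual Banach algebra $B(G)=C^*(G)^*$, is itself a completely contractive dual Banach algebra with predual $B_*=C^*(G)/B_\perp$, and that its identity is the constant function $1_G$: the identity $e$ of $B$ satisfies $e(s)u(s)=u(s)$ for every $u\in A(G)\subseteq B$ and every $s\in G$, and $A(G)$ has no common zero, so $e=1_G$. Operator Connes-amenability then supplies an operator $C_{\sigma,w}$-virtual diagonal $\bm{D}\in C_{\sigma,w}((B\widehat{\otimes}B)^*)^*$ with $m_{\sigma,w}\bm{D}=1_G$ and $u\cdot\bm{D}=\bm{D}\cdot u$ for all $u\in B$.

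Next I would build a complete contraction $\Gamma\colon C^*(G)\otimes_{\min}C^*(G)\to (B\widehat{\otimes}B)^*$ determined on elementary tensors by $\Gamma(a\otimes b)(u\otimes v)=\langle a,u\rangle\langle b,v\rangle$. This factors through the standard complete isometry $C^*(G)\otimes_{\min}C^*(G)\hookrightarrow CB(B(G),C^*(G))$, followed by restriction of the variable to $B$ and the inclusion $C^*(G)\hookrightarrow B^*$, so $\|\Gamma\|_{cb}\le 1$. The crucial technical point, which I expect to be the main obstacle, is to check that $\Gamma$ lands inside $C_{\sigma,w}((B\widehat{\otimes}B)^*)$. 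This rests on the fact that, since $B(G)$ is a dual Banach algebra, its predual $C^*(G)$ is a closed $B(G)$-submodule of $B(G)^*$; one then verifies $w\cdot\Gamma(a\otimes b)=\Gamma(a\otimes(w\cdot b))$ and $\Gamma(a\otimes b)\cdot w=\Gamma((w\cdot a)\otimes b)$, where $w\cdot a,w\cdot b\in C^*(G)$ are the module actions. Hence the module maps $w\mapsto w\cdot\Gamma(a\otimes b)$ and $w\mapsto \Gamma(a\otimes b)\cdot w$ admit preadjoints valued in $B_*$ and are weak*-weakly continuous, and norm-closedness of $C_{\sigma,w}$ gives $\Gamma(C^*(G)\otimes_{\min}C^*(G))\subseteq C_{\sigma,w}((B\widehat{\otimes}B)^*)$.

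Now set $\phi:=\bm{D}\circ\Gamma$, a bounded functional on $C^*(G)\otimes_{\min}C^*(G)$, and evaluate it on $\delta_s\otimes\delta_t$. On the diagonal one has $u(s)v(s)=(uv)(s)$, so $\Gamma(\delta_s\otimes\delta_s)=m^*(\overline{\delta_s})$ and therefore $\phi(\delta_s\otimes\delta_s)=\langle m_{\sigma,w}\bm{D},\overline{\delta_s}\rangle=1_G(s)=1$. Off the diagonal I would exploit $u\cdot\bm{D}=\bm{D}\cdot u$: a direct computation gives $\Gamma(\delta_s\otimes\delta_t)\cdot w=w(s)\,\Gamma(\delta_s\otimes\delta_t)$ and $w\cdot\Gamma(\delta_s\otimes\delta_t)=w(t)\,\Gamma(\delta_s\otimes\delta_t)$, so the diagonal relation forces $(w(s)-w(t))\phi(\delta_s\otimes\delta_t)=0$ for every $w\in B$. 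Since $A(G)\subseteq B$ separates the points of $G$, we conclude $\phi(\delta_s\otimes\delta_t)=0$ whenever $s\neq t$. Thus $\phi$ agrees with $1_\Delta$ on the dense subalgebra spanned by the $\delta_s\otimes\delta_t$.

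Finally I would upgrade this bounded agreement to a genuine state, notably without requiring $\phi$ itself to be positive. Let $\psi$ be the state on $C^*(G)\otimes_{\max}C^*(G)=C^*(G\times G)$ obtained from the positive-definite function $1_\Delta$ by the GNS construction, and let $q\colon C^*(G)\otimes_{\max}C^*(G)\to C^*(G)\otimes_{\min}C^*(G)$ be the canonical quotient $*$-homomorphism. Then $\phi\circ q$ and $\psi$ are bounded functionals on the max-tensor product agreeing on the dense subalgebra spanned by the $\delta_s\otimes\delta_t$, so $\phi\circ q=\psi$. In particular $\psi$ vanishes on $\ker q$, hence descends to a state on $C^*(G)\otimes_{\min}C^*(G)$ extending $1_\Delta$. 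By Remark \ref{rem:FP}, $G$ has the factorization property.
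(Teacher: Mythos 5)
Your proof is correct and takes essentially the same route as the paper's first proof: both convert the operator $C_{\sigma,w}$-virtual diagonal $\bm{D}$ into a bounded functional on $C^*(G)\otimes_{\min}C^*(G)$, show it equals $1$ on $\delta_s\otimes\delta_s$ (using $m_{\sigma,w}\bm{D}=1_G$) and $0$ on $\delta_s\otimes\delta_t$ for $s\neq t$ (using $u\cdot\bm{D}=\bm{D}\cdot u$ with $u=\delta_s\in A(G)\subseteq B$), and conclude that $1_\Delta$ extends to a state on the minimal tensor product, which is the factorization property by Remark \ref{rem:FP}. If anything, you are more explicit than the paper on two points it leaves implicit, namely that the functionals $\Gamma(a\otimes b)$ (equivalently, the elements of $B_*\check\otimes B_*$) lie in $C_{\sigma,w}((B\widehat{\otimes}B)^*)$, and that the bounded extension of $1_\Delta$ is automatically a state, via comparison with the GNS state on $C^*(G)\otimes_{\max}C^*(G)$.
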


We provide two proofs of Theorem \ref{thm:discrete-main}. The first proof shows how operator Connes-amenability of $B(G)$ can be deduced from the existence of a $C_{\sigma,w}$-virtual diagonal and the second proof relies on a carefully chosen derivation. The latter proof is quicker whereas we consider the first proof to be more insightful.

\begin{proof}[Proof 1.]
	Fixing notation, we let $\varpi$ denote the universal representation of $G$ so that
	$$C^*(G)=C^*_\varpi=\overline{\tn{span}\{\varpi(s):s\in G\}}^{\|\cdot\|}.$$
	
	Let $B$ be a weak*-closed subalgebra of $B(G)$ containing $A(G)$. Then $B_*:=C^*(G)/B_\perp$ is a predual of $B$ where $B_\perp=\{a\in C^*(G) : \lla a,b\rra=0 \text{ for all }b\in B\}$. For each $s\in G$, we let $\theta(s)=\varpi(s)+B_\perp\in B_*$. Letting $m\colon B\widehat\otimes B\to B$ be the multiplication map, we have that
	$$m^*(\theta(s))=\theta(s)\otimes\theta(s)$$
	for all $s\in G$
	since if $u,v\in B$, then
	$$\lla \theta(s), m(u\otimes v)\rra=\lla \theta(s),uv\rra=u(s)v(s)=\lla \theta(s)\otimes \theta(s),u\otimes v\rra.$$
	
	Suppose $B$ admits a $C_{\sigma,w}$-diagonal $\bm D\in C_{\sigma,w}([B\widehat\otimes B]^*)^*$ and let $\psi=\bm D|_{B_*\check\otimes B_*}$. Then
	$$\lla \theta(s)\otimes\theta(s),\psi\rra=\lla m^*\theta(s),\psi\rra=\lla \theta(s),m_{\sigma,w} \bm D\rra=\lla \theta(s),1_G\rra=1$$
	for each $s\in G$. Also,
	$u\cdot \bm D=\bm D\cdot u$ for each $u\in B$ implies that
	$$u(s)\lla\theta(s)\otimes\theta(t),\psi\rra=\lla \theta(s)\otimes \theta(t), u\cdot \bm D\rra=\lla \theta(s)\otimes \theta(t), \bm D\cdot u\rra=u(t)\lla\theta(s)\otimes\theta(t),\psi\rra$$
	for all $u\in B$ and $s,t\in G$. In particular, taking $u=\delta_s\in A(G)\subset B$ implies that $\lla \theta(s)\otimes\theta(t),\psi\rra=0$ for all $s,t\in G$ with $s\neq t$.
	
	Let $q$ denote the canonical contractive map $C^*(G)\otimes_{\min}C^*(G)=C^*(G)\check\otimes C^*(G)\to B_*\check\otimes B_*$. Then $\psi\circ q\in (C^*(G)\otimes_{\min} C^*(G))^*$ and
	$$(\psi\circ q)(\varpi(s)\otimes\varpi(t))=1_\Delta(s,t)$$
	for all $s,t\in G$. In other words, $1_\Delta$ extends to a state on $C^*(G)\otimes_{\min} C^*(G)$, which shows that $G$ has the factorization property by Remark \ref{rem:FP}.
\end{proof}

\begin{proof}[Proof 2.]
	Let $B_{\min}(G\times G)=B_{\varpi\times \varpi}$ where $\varpi$ denotes the universal representation of $G$. Then $B_{\min}(G\times G)$ is the dual space of $C^*(G)\otimes_{\min}C^*(G)$. Set
	$$E=\{v\in B_{\min}(G\times G) : v(s,s)=0\text{ for all }s\in G\}.$$
	Then $E$ is a weak*-closed ideal of {$B_{\min}(G\times G)$} and, thus, is a normal completely bounded dual $B(G)$-bimodule. Consider the completely bounded, weak* continuous derivation $D\colon B\to E$ defined by
	$$D(u)=u\otimes 1-1\otimes u.$$
	Suppose that there exists $v\in E$ such that $D(u)(s,t)=u\cdot v-v\cdot u$, i.e.
	$$D(u)(s,t)=(u(s)-u(t))v(s,t)$$
	for all $s,t\in G$. Then
	$$D(\delta_s)(s,t)=v(s,t)=1$$
	for all $s,t\in G$ with $s\neq t$.
	So $v=1_{G\times G}-1_{\Delta}$. This implies that $1_{\Delta}\in B_{\min}(G\times G)$ and, thus, that $G$ has the factorization property by Remark \ref{rem:FP}.
\end{proof}

\begin{remark}
	Theorem \ref{thm:LC-main} is implied by Theorem \ref{thm:discrete-main} in {the} special case when $G$ is discrete. Indeed, suppose that $G$ is an infinite discrete group with property (T). If $G$ has finite almost periodic compactification, then $G$ is not residually finite and, hence, does not have have the factorization property. So the failure of $B(G)$ to be operator Connes-amenable follows from Theorem \ref{thm:discrete-main} when one takes $B=B(G)$.
\end{remark}

Theorem \ref{thm:discrete-main} allows us to describe more concrete examples of discrete groups $G$ where $B(G)$ is not operator Connes-amenable.

\begin{example}\label{ex:Sp}
	Suppose that $G$ is a lattice in the universal covering of $\tn{Sp}(n,1)$. Then, as observed in \cite[Remark 3.2]{Thom}, $G$ does not have the factorization property since $G$ has property (T) and work of Raghunathan shows that $G$ is not residually finite \cite{Rag}. Consequently, $B(G)$ is not operator Connes-amenable.
\end{example}

\begin{example}
	A group $G$ is \textit{Hopfian} if it is isomorphic to no proper quotient of itself. It is known that all finitely generated residually finite discrete groups are Hopfian \cite{Mal}. So if $G$ is a non-Hopfian discrete group with property (T), then $G$ does not have the factorization property and, hence, $B(G)$ is not operator Connes-amenable.
	
	Examples of non-Hopfian groups with property (T) are found in \cite{OllW}, \cite{Cor} and \cite{Thom}. The groups found in \cite{OllW} are constructed using a random process, whereas the examples in \cite{Cor} and \cite{Thom} are very concretely described as a quotient of a group of matrices over certain rings by a central subgroup.
\end{example}

We end this section by noting that the converse to Theorem \ref{thm:LC-main} is also true in the presence of Property (T). Recall first that a locally compact group is \textit{maximally almost periodic} if finite dimensional representations separate points of $G$. Of course, residually finite groups are examples of maximally almost periodic groups.

The following proof is similar to that of \cite[Theorem 4.6]{RunS}, which states that if $G$ is a locally compact group such that $C^*(G)$ is residually finite dimensional (such as $\F_2$), then $B(G)$ is operator Connes-amenable.

\begin{prop}\label{prop:converse}
	Let $G$ be a maximally almost periodic locally compact group. Then $B(G)$ admits a weak*-closed subalgebra $B$ containing $A(G)$ that is operator Connes-amenable.
\end{prop}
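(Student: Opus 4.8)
The plan is to build $B$ from the regular representation together with the finite-dimensional representations and to manufacture its virtual diagonal out of the normalized matrix diagonals of the finite-dimensional representations, in the spirit of \cite[Theorem 4.6]{RunS}; the role played there by residual finite-dimensionality will be played here by maximal almost periodicity. I would take $B:=B_\pi$ for $\pi=\lambda\oplus\pi_{\tn{f.d.}}$, where $\pi_{\tn{f.d.}}=\bigoplus\{\sigma:\sigma\in\widehat{G}_{\tn{f.d.}}\}$ is the direct sum of all finite-dimensional irreducible representations of $G$. Since the finite-dimensional irreducibles are closed under tensoring and conjugation, and since Fell's absorption principle gives $\lambda\otimes\tau\cong\lambda\otimes 1_{\mc H_\tau}$ for every $\tau$, one has $\pi\otimes\pi\prec\pi$, so $B$ is a weak*-closed subalgebra of $B(G)$. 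As $\lambda\prec\pi$ we get $A(G)=A_\lambda\subseteq B$, and as the trivial representation is a summand of $\pi_{\tn{f.d.}}$ the constant function $1_G$ lies in $B$, so $B$ is unital. Finally, maximal almost periodicity says precisely that $\iota\colon G\to G^{\tn{AP}}$ is injective.

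I would then reduce operator Connes-amenability of $B$ to producing an operator $C_{\sigma,w}$-virtual diagonal $\bm D\in C_{\sigma,w}((B\widehat{\otimes} B)^*)^*$. For each $\sigma\in\widehat{G}_{\tn{f.d.}}$ with $\dim\mc H_\sigma=d_\sigma$ and orthonormal basis $(e_i)$, set
$$\bm D_\sigma=\frac{1}{d_\sigma}\sum_{i,j}\sigma_{e_i,e_j}\otimes\overline{\sigma_{e_i,e_j}}\in B_{\pi_{\tn{f.d.}}}\widehat{\otimes} B_{\pi_{\tn{f.d.}}}\subseteq B\widehat{\otimes} B,$$
the conjugate being legitimate since $\overline{\sigma_{e_i,e_j}}$ is a coefficient of $\overline\sigma\in\widehat{G}_{\tn{f.d.}}$. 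A direct computation using unitarity of $\sigma$ gives $m(\bm D_\sigma)=1_G$ and, writing $\varpi$ for the universal representation,
$$\lla \varpi(s)\otimes\varpi(t),\bm D_\sigma\rra=\frac{1}{d_\sigma}\chi_\sigma(t^{-1}s)\qquad(s,t\in G);$$
the conjugate in the second leg is exactly what makes both identities hold. I would take $\bm D$ to be a weak*-cluster point of a suitable net of convex combinations of the $\bm D_\sigma$, the net being organized by the finite-dimensional representation theory of the compact group $G^{\tn{AP}}$: under $\iota$ the diagonal state of $G^{\tn{AP}}$, which exists because $G^{\tn{AP}}$ is compact hence amenable (so that $A(G^{\tn{AP}})=B(G^{\tn{AP}})$ is operator Connes-amenable by \cite{RunS}), pulls back to the functional on $B_*\check\otimes B_*$ taking the value $1_{\Delta_G}(s,t)$ on $\varpi(s)\otimes\varpi(t)$.

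With $\bm D$ so obtained, $m_{\sigma,w}\bm D=\lim m(\bm D_\sigma)=1_G$ gives $a(m_{\sigma,w}\bm D)=a$ for all $a\in B$, while the centrality $u\cdot\bm D=\bm D\cdot u$ would follow from
$$\lla \varpi(s)\otimes\varpi(t),u\cdot\bm D-\bm D\cdot u\rra=(u(s)-u(t))\,\lla\varpi(s)\otimes\varpi(t),\bm D\rra=(u(s)-u(t))\,1_{\Delta_G}(s,t)=0$$
for every $u\in B$ and $s,t\in G$, the right-hand value being supported on $\{s=t\}$; here injectivity of $\iota$ identifies the limit $\lim_\sigma d_\sigma^{-1}\chi_\sigma(t^{-1}s)$ with $1_{\Delta_G}(s,t)=1_{\{\iota(s)=\iota(t)\}}$. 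This would show $B$ is operator Connes-amenable, so $B$ is the desired weak*-closed subalgebra containing $A(G)$.

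The main obstacle is the passage to the limit and the verification of centrality against the whole predual. First, one needs the normalized characters $d_\sigma^{-1}\chi_\sigma$ to be forced toward $1_{\{e\}}$; this is where maximal almost periodicity is essential, and I would secure it by transporting the diagonal of $G^{\tn{AP}}$ along the injective map $\iota$ rather than choosing a net by hand. Second, and more delicately, the commutator identity above is only tested against the point functionals $\varpi(s)\otimes\varpi(t)$, whereas $u\cdot\bm D-\bm D\cdot u$ must vanish as a functional on all of $C_{\sigma,w}((B\widehat{\otimes} B)^*)$; establishing that these point functionals are weak*-total there, and that the cluster point $\bm D$ genuinely lies in $C_{\sigma,w}((B\widehat{\otimes} B)^*)^*$ rather than merely in $(B\widehat{\otimes} B)^{**}$, is the technical crux, to be handled through normality of the relevant bimodule exactly as the residually-finite-dimensional case is handled in \cite{RunS}.
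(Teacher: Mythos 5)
Your first half is fine, and in fact lands on the same algebra as the paper: the paper takes $B=B_\pi$ with $\pi$ the sum of all finite-dimensional irreducibles, getting $A(G)\subseteq B_\pi$ from the Bekka--Lau--Schlichting theorem (point separation is where maximal almost periodicity enters), while you adjoin $\lambda$ and use Fell absorption -- a harmless variant. Your elements $\bm D_\sigma$ are also the natural building blocks. But the paper never verifies the diagonal identities against point functionals. Its proof has only two ingredients: \cite[Lemma 4.5]{RunS}, which says $A_\pi$ is operator \emph{amenable} -- i.e.\ it has an approximate diagonal bounded in the operator space projective norm with \emph{norm} convergence $\|u\cdot\bm D_\alpha-\bm D_\alpha\cdot u\|_{B\widehat{\otimes}B}\to 0$ for $u\in A_\pi$ -- together with the general principle that a weak*-dense operator amenable subalgebra forces operator Connes-amenability. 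Norm convergence on a weak*-dense subalgebra is exactly what makes a weak*-cluster point $\bm D$ central against \emph{all} of $C_{\sigma,w}((B\widehat{\otimes}B)^*)$ for $u\in A_\pi$; centrality then extends to every $u\in B$ because, for fixed $\Phi\in C_{\sigma,w}((B\widehat{\otimes}B)^*)$, the map $u\mapsto\lla \Phi,u\cdot\bm D-\bm D\cdot u\rra$ is weak*-continuous -- that weak*-to-weak continuity is precisely what membership in $C_{\sigma,w}$ provides.

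The gap is your substitute for this: pointwise convergence of normalized characters, tested only against the functionals $\varpi(s)\otimes\varpi(t)$, with the totality of those functionals in $C_{\sigma,w}((B\widehat{\otimes}B)^*)$ deferred to ``the same argument as in \cite{RunS}.'' No such totality statement exists in \cite{RunS}, and whether one can understand $C_{\sigma,w}$ in such concrete terms is exactly what this paper flags as the unresolved obstruction (see the discussion around Problem \ref{prob:discrete-subprob}); so this cannot be waved through as a technicality. Worse, unless $G$ is discrete these functionals do not even lie in $C_{\sigma,w}((B\widehat{\otimes}B)^*)$: from $u\cdot(\varpi(s)\otimes\varpi(t))=u(t)\,(\varpi(s)\otimes\varpi(t))$, membership would force $u\mapsto u(t)$ to be weak*-continuous on $B$, which fails for every non-discrete group (e.g.\ $C^*(\R)\cong C_0(\widehat{\R})$ contains no characters), whereas the proposition concerns all locally compact maximally almost periodic groups, including infinite compact ones. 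Finally, your diagonal is built only from finite-dimensional coefficients while centrality must hold for all $u\in B\supseteq A(G)$; the only available route is centrality on a weak*-dense subalgebra, which requires $A_{\pi_{\tn{f.d.}}}$ to be weak*-dense in your $B=B_{\lambda\oplus\pi_{\tn{f.d.}}}$, i.e.\ $\lambda\prec\pi_{\tn{f.d.}}$ -- exactly the BLS-type input you set out to avoid by adjoining $\lambda$. As written the argument does not close; it is repaired precisely by the two quoted ingredients of the paper's proof.
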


\begin{proof}
	Let $\pi=\oplus\{\sigma\in \widehat G : \pi\text{ is finite dimensional}\}$. Then $A_\pi$ is conjugate-invariant and point separating. So $B_\pi$ contains $A(G)$ by \cite{BLS}. Further, $A_\pi$ is operator amenable by \cite[Lemma 4.5]{RunS}. It follows that $B_\pi$ is operator Connes-amenable by weak*-density of $A_\pi$ inside of $B_\pi$.
\end{proof}

\begin{remark}
	If $G$ is a locally compact group so that $C^*(G)$ is residually finite dimensional, then the algebra $B_\pi$ produced in the proof of Proposition \ref{prop:converse} is equal to $B(G)$ and, so, $B(G)$ is operator Connes-amenable.
\end{remark}

\begin{corollary}
	Suppose $G$ is a discrete group with property (T). If $G$ has the factorization property, then $B(G)$ admits a weak*-closed subalgebra $B$ containing $A(G)$ that is operator Connes-amenable.
\end{corollary}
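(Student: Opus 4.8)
The plan is to obtain this as a direct corollary of Proposition \ref{prop:converse}; the only point requiring verification is that the hypotheses force $G$ to be maximally almost periodic. First I would invoke the residual finiteness result recalled at the beginning of Section \ref{sec:discrete}: since $G$ is a discrete group with property (T) that also enjoys the factorization property, the theorem of \cite{k} (with the short proof in \cite[Theorem 7.4]{Oz}) guarantees that $G$ is residually finite.

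Next I would use the observation recorded just before Proposition \ref{prop:converse} that residually finite groups are maximally almost periodic. Concretely, the finite quotients of $G$ supply finite-dimensional unitary representations that separate the points of $G$, which is exactly the definition of maximal almost periodicity. With this in hand, Proposition \ref{prop:converse} applies verbatim and yields a weak*-closed subalgebra $B$ with $A(G)\subseteq B\subseteq B(G)$ that is operator Connes-amenable, completing the argument.

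I do not anticipate any genuine obstacle, since the proof is merely a chain of established implications rather than a fresh construction. The substantive content sits elsewhere: in Proposition \ref{prop:converse}, which rests on the operator amenability of $A_\pi$ from \cite[Lemma 4.5]{RunS} together with a weak*-density argument, and in the cited equivalence linking property (T), the factorization property, and residual finiteness. If anything demands care it is simply confirming that the hypotheses of Proposition \ref{prop:converse} are met exactly, which the two implications above accomplish.
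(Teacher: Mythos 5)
Your proof is correct and follows exactly the same route as the paper: Kirchberg's theorem (via \cite{k} and \cite[Theorem 7.4]{Oz}) gives residual finiteness, which yields maximal almost periodicity, and then Proposition \ref{prop:converse} applies. The paper's own proof is just a one-line compression of this same chain of implications.
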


\begin{proof}
	Discrete property (T) groups with the factorization property are residually finite, hence maximally almost periodic.
\end{proof}

\section{Closing Remarks}\label{sec:end}

Although we now know many examples of non-amenable groups $G$ where $B(G)$ is not operator Connes-amenable, the question of understanding exactly when $B(G)$ is operator Connes-amenable remains wide open. In particular, there are many basic examples of non-amenable groups $G$ where it is unknown whether $B(G)$ is operator Connes-amenable or not.

\begin{prob}
	Is $B(\tn{SL}(2,\R))$ operator Connes-amenable?
\end{prob}

\begin{prob}
	Is $B(G)$ operator Connes-amenable for any non-amenable connected locally compact group $G$?
\end{prob}

Next consider the residually finite group $G=\tn{SL}(3,\Z)$. We know by Proposition \ref{prop:converse} that $B(G)$ admits a weak*-closed subalgebra $B$ containing $A(G)$ that is weak*-closed, however it is unknown whether $B(G)$ itself is operator Connes-amenable. Indeed, since $C^*(G)$ is not residually finite by work of Bekka \cite{Bek}, the algebra $B_\pi$ produced in the proof of Proposition \ref{prop:converse} is strictly contained in $B(G)$.

\begin{prob}
	Is $B(\tn{SL}(3,\Z))$ operator Connes-amenable?
\end{prob}

More generally, one can ask whether the following strong converse to Theorem \ref{thm:discrete-main} holds.

\begin{prob}\label{prob:discrete-main}
	Is $B(G)$ operator Connes-amenable for every discrete group $G$ with the factorization property?
\end{prob}

It is known that a discrete group $G$ has the factorization property if and only if the canonical trace $\delta_e$ of $C^*(G)$ is an amenable trace (e.g., see \cite{Brown} and definitions therein). As such, one can view the factorization property as being a weak form of amenability for $G$ when looking through the correct lens. Problem \ref{prob:discrete-main} having a positive solution would help solidify the viewpoint that the factorization property should be viewed as a weak form of amenability more generally.

An obvious approach when attempting to show that Problem \ref{prob:discrete-main} has a positive solution is to take the positive definite function $1_\Delta\in B_{\min}(G\times G):=[C^*(G)\otimes_{\min}C^*(G)]^*$ for a discrete group $G$ with the factorization property and attempt to extend it to an element of $C_{\sigma,w}([B(G)\widehat\otimes B(G)]^*)^*=C_{\sigma,w}(W^*(G)\bar\otimes W^*(G))^*$ in a way that produces an operator $C_{\sigma,w}$-diagonal. Here $W^*(G)$ denotes the von Neumann algebra $C^*(G)^{**}$. The authors of this paper have been unsuccessful with this approach, perhaps due to lacking a proper understanding of what the space $C_{\sigma,w}(\tn W^*(G)\bar{\otimes}W^*(G))$ consists of. If the following problem were to have a positive solution, a positive solution to Problem \ref{prob:discrete-main} would immediately follow.

\begin{prob}\label{prob:discrete-subprob}
	Is $C_{\sigma,w}(\tn W^*(G)\bar{\otimes}W^*(G))=C^*(G)\otimes_{\min}C^*(G)$ for every discrete group $G$?
\end{prob}

\begin{remark}
	Suppose $G$ is a discrete group with the factorization property and for which $C_{\sigma,w}(\tn W^*(G)\bar{\otimes}W^*(G))=C^*(G)\otimes_{\min}C^*(G)$. Then Problem \ref{prob:discrete-main} has a positive solution for $G$ since $1_\Delta\in B_{\min}(G\times G)=C_{\sigma,w}(W^*(G)\bar\otimes W^*(G))^*$ would clearly be an operator $C_{\sigma,w}$-virtual diagonal for $B(G)$.
\end{remark}

As some evidence that Problem \ref{prob:discrete-subprob} has a hope of being true, we show that it holds when $G$ is amenable.

\begin{prop}\label{prop:disc-amen}
	Let $G$ be an amenable discrete group. Then
	$$C_{\sigma,w}(\tn W^*(G)\bar{\otimes}W^*(G))=C^*(G)\otimes_{\min} C^*(G).$$
\end{prop}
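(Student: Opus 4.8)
The plan is to sandwich $C_{\sigma,w}(W^*(G)\bar\otimes W^*(G))$ between $C^*(G)\otimes_{\min}C^*(G)$ and the Fubini product of $C^*(G)$ with itself, and then to collapse the sandwich using nuclearity of $C^*(G)$. Throughout I identify $W^*(G)\bar\otimes W^*(G)$ with $(B(G)\widehat\otimes B(G))^*$ via the operator-space duality $(M_*\widehat{\otimes}N_*)^*=M\bar\otimes N$ applied to $B(G)=W^*(G)_*$, and I record the resulting $B(G)$-bimodule structure on this dual space: for $a,u,v\in B(G)$ and $x\in W^*(G)\bar\otimes W^*(G)$,
$$\lla a\cdot x,u\otimes v\rra=\lla x,u\otimes va\rra\qquad\text{and}\qquad \lla x\cdot a,u\otimes v\rra=\lla x,ua\otimes v\rra.$$

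First I would prove the easy inclusion $C^*(G)\otimes_{\min}C^*(G)\subseteq C_{\sigma,w}(W^*(G)\bar\otimes W^*(G))$, which needs no hypothesis on $G$. Since $C_{\sigma,w}$ is norm-closed (a uniform limit of weak*-weakly continuous module maps is again weak*-weakly continuous, because the predual $C^*(G)$ is norm-closed in $B(G)^*$), and since $C^*(G)\otimes_{\min}C^*(G)$ is the norm-closed span of the elementary tensors $\varpi(s)\otimes\varpi(t)$, it suffices to treat these generators. The bimodule formulas give $a\cdot(\varpi(s)\otimes\varpi(t))=a(t)\,\varpi(s)\otimes\varpi(t)$ and $(\varpi(s)\otimes\varpi(t))\cdot a=a(s)\,\varpi(s)\otimes\varpi(t)$; as $G$ is discrete, $\varpi(t)\in C^*(G)$ and $a\mapsto a(t)=\lla\varpi(t),a\rra$ is weak*-continuous, so both module maps are weak*-weakly continuous.

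Next I would establish $C_{\sigma,w}(W^*(G)\bar\otimes W^*(G))\subseteq\mathcal F(C^*(G),C^*(G))$, the Fubini product, which again holds for every discrete $G$. Given $x\in C_{\sigma,w}$, I test weak*-weak continuity of $a\mapsto a\cdot x$ against the functional $\phi\otimes 1_G$, where $1_G$ is the unit of $B(G)$; using $\lla a\cdot x,\phi\otimes 1_G\rra=\lla x,\phi\otimes a\rra$, this forces $a\mapsto\lla x,\phi\otimes a\rra$ to be weak*-continuous, i.e.\ the slice $(\phi\otimes\id)(x)$ lies in $C^*(G)$ for every $\phi\in B(G)=W^*(G)_*$. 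Testing $a\mapsto x\cdot a$ against $1_G\otimes\psi$ symmetrically places every slice $(\id\otimes\psi)(x)$ in $C^*(G)$. Since all normal slices of $x$ land in $C^*(G)$, by definition $x$ belongs to $\mathcal F(C^*(G),C^*(G))$.

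Finally, amenability enters only to collapse the sandwich: for amenable discrete $G$ the algebra $C^*(G)=C^*_r(G)$ is nuclear, hence has the slice map property, so $\mathcal F(C^*(G),C^*(G))=C^*(G)\otimes_{\min}C^*(G)$ by the Tomiyama--Kraus theory of Fubini products, and combining the three steps gives the claim. The main obstacle is this last step: one must correctly invoke the slice map property and reconcile its slice conditions—normally phrased against the normal functionals of a concrete $B(H\otimes H)$—with the conditions produced by $C_{\sigma,w}$, using that for elements of $W^*(G)\bar\otimes W^*(G)$ the slices against $B(H)_*$ depend only on their restrictions to $W^*(G)_*=B(G)$. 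The bimodule bookkeeping in the first two steps I expect to be routine, with all of the genuine content residing in the nuclearity input. I note finally that the two outer inclusions hold for arbitrary discrete $G$, giving $C^*(G)\otimes_{\min}C^*(G)\subseteq C_{\sigma,w}\subseteq\mathcal F(C^*(G),C^*(G))$ in general, which is consistent with Problem \ref{prob:discrete-subprob} remaining open when $C^*(G)$ fails to be exact.
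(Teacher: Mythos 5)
Your proof is correct, but it takes a genuinely different route from the paper's. The paper uses amenability in its combinatorial guise: it takes a net $\{v_i\}$ of \emph{finitely supported} elements of $B(G)$ converging weak* to $1_G$, checks by a direct pairing computation that any two-sided compression $v_j\cdot a\cdot v_i$ of $a\in W^*(G)\bar\otimes W^*(G)$ by finitely supported elements is a finite linear combination of elementary tensors $\varpi(s)\otimes\varpi(t)$, and then uses the $C_{\sigma,w}$-condition to exhibit $a$ as an iterated weak limit of such compressions, finishing with weak closedness of the norm-closed subspace $C^*(G)\otimes_{\min}C^*(G)$. You instead use amenability only through Lance's theorem ($C^*(G)=C^*_r(G)$ is nuclear), sandwiching $C_{\sigma,w}$ between the minimal tensor product and the Fubini product and collapsing the sandwich via the Tomiyama--Wassermann--Kraus slice map theorem for nuclear algebras; your two outer inclusions are valid for every discrete group. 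Your bimodule formulas and the slicing against $\phi\otimes 1_G$ and $1_G\otimes\psi$ are the correct ones, your norm-closure argument for the easy inclusion is sound (note the paper needs this inclusion too but leaves it implicit), and the reconciliation you flag---that for $x\in W^*(G)\bar\otimes W^*(G)$ slices against $B(H)_*$ factor through $W^*(G)_*=B(G)$---is exactly what must be checked; it follows from surjectivity of the restriction $B(H)_*\to W^*(G)_*$ together with Kraus's representation-independence of the Fubini product. What each approach buys: the paper's argument is elementary and self-contained, needing nothing beyond duality and F\o{}lner-type approximation in $B(G)$; yours invokes heavier machinery but isolates the real obstruction in Problem \ref{prob:discrete-subprob}, since your sandwich shows the conclusion holds whenever $C^*(G)$ has the slice map property (equivalently, the operator approximation property), and---because the OAP implies exactness while $C^*(G)$ is non-exact for, e.g., free groups---it also explains why this route cannot by itself settle the general problem.
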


\begin{proof}
	Let $\varpi$ denote the universal representation of $G$ and suppose that $u_1,u_2\in B(G)$ have finite supports $\{s_1,\ldots,s_m\}\subset G$ and $\{t_1,\ldots,t_n\}\subset G$ and $a\in W^*(G)\bar{\otimes}W^*(G)$. Set $b=\sum_{i,j} c_{i,j}\big(\varpi(s_i)\otimes \varpi(t_j)\big)$ where $c_{i,j}=u_1(s_i)u_2(t_j)\lla\delta_{s_i}\otimes \delta_{t_j},a\rra$. We claim that $u_2\cdot a\cdot u_1=b$. Indeed, let $v_1,v_2\in B(G)$. Then
	\begin{eqnarray*}
		&& \lla v_1\otimes v_2,u_2\cdot a\cdot u_1\rra\\
		&=& \lla (u_1v_1)\otimes (u_2v_2), a\rra\\
		&=& \sum_{ij} u(s_i)v_1(s_i)u_2(t_j)v_2(t_j)\lla \delta_{s_i}\otimes \delta_{t_j},a\rra\\
		&=& \sum_{i,j} v_1(s_i)v_2(t_j) c_{i,j}\\
		&=& \lla v_1\otimes v_2,b\rra.
	\end{eqnarray*}
	This verifies our claim.
	
	Now suppose that $a\in C_{\sigma,w}(W^*(G)\bar\otimes W^*(G))$. Since $G$ is amenable, there is a net $\{v_i\}\subset B(G)$ of finitely supported elements converging to $1_G$ in the weak* topology. Then
	$$ \lim_i\lim_j \big(v_j\cdot a\cdot v_i\big)= \lim_i \big(a\cdot v_i\big)= a$$
	in the weak topology due to our assumption that $a\in C_{\sigma,w}(W^*(G)\overline\otimes W^*(G))$.
	Since $C^*(G)\otimes_{\min} C^*(G)$ is closed in the weak topology and $v_j\cdot a\cdot v_i\in C^*(G)\otimes_{\min}C^*(G)$ for all $G$ by our previous claim, we deduce that $a\in C^*(G)\otimes_{\min}C^*(G)$. It follows that $C_{\sigma,w}(\tn W^*(G)\overline{\otimes}W^*(G))=C^*(G)\otimes_{\min}C^*(G)$.
\end{proof}

Recall that every dual Banach algebra admitting a normal, virtual diagonal is Connes-amenable (see \cite[Section 5.1]{Run-book} for the definition and an exposition on the topic). Although the converse to this statement is false \cite{Run4}, it does hold for certain classes of dual Banach algebras, including von Neumann algebras \cite{Co} and measure algebras \cite{Run3}.
The definition of a normal, virtual diagonal can be easily modified to the operator space setting in order to produce the notion of an ``operator normal, virtual diagonal'' for dual, completely contractive Banach algebras. Such algebras admitting an ``operator normal, virtual diagonal'' will necessarily be operator Connes-amenable. Given the fact that measure algebras and Fourier-Stieltjes algebras are often viewed as dual to one another, it is natural to ask the following.

\begin{prob}
	Let $G$ be a locally compact group. Is $B(G)$ operator Connes-amenable if and only if $B(G)$ admits an ``operator normal, virtual diagonal''?
\end{prob}

We finish this paper by noting the solution to a question raised in \cite{W}.

\begin{remark}
	\cite[Problem 7.1]{W} asks whether closed subgroups of locally compact groups with the factorization property necessarily have the factorization property. By referring to Example \ref{ex:Sp}, we can see that this question has a negative solution. Indeed, if $\widetilde G$ is the universal covering of $\tn{Sp}(n,1)$, then $\widetilde{G}$ has the factorization property since $C^*(\widetilde G)$ is nuclear,  but lattices inside of $\widetilde G$ do not have the factorization property.
\end{remark}

\bibliographystyle{amsplain}

\end{document}